\documentclass[11pt]{amsart}
\usepackage{amssymb,amsmath,amsthm,amsfonts,amsopn,calligra,color,enumerate,latexsym,mathtools,microtype,stackrel}
\usepackage[all,cmtip]{xy} \SelectTips{eu}{} \SilentMatrices
\xyoption{all}
\xyoption{arc}
\usepackage[T1]{fontenc}
\usepackage[normalem]{ulem}
\usepackage{enumitem}
\usepackage{amscd}
\numberwithin{equation}{section}

\usepackage[urlcolor=blue]{hyperref}

\makeatletter  
\@namedef{subjclassname@2020}{%
  \textup{2020} Mathematics Subject Classification}
\makeatother

\usepackage{color}
\definecolor{tealgreen}{rgb}{0.0, 0.6, 0.5}


\theoremstyle{plain}
\newtheorem{thm}{Theorem}[section]

\newtheorem{prop}[thm]{Proposition}
\newtheorem{lemma}[thm]{Lemma}
\newtheorem{cor}[thm]{Corollary}

\theoremstyle{definition}
\newtheorem{defn}[thm]{Definition}
\newtheorem*{defn*}{Definition}
\newtheorem*{question*}{Question}

\newtheorem{example}[thm]{Example}
\newtheorem*{example*}{Example}
\newtheorem{rem}[thm]{Remark}
\newtheorem*{rem*}{Remark}

\newtheorem{ipg}[thm]{}



\newcommand{\leftrarrows}{\mathrel{\raise.75ex\hbox{\oalign{%
   $\scriptstyle\leftarrow$\cr
   \vrule width0pt height.5ex$\hfil\scriptstyle\relbar$\cr}}}}
 \newcommand{\lrightarrows}{\mathrel{\raise.75ex\hbox{\oalign{%
  $\scriptstyle\relbar$\hfil\cr
   $\scriptstyle\vrule width0pt height.5ex\smash\rightarrow$\cr}}}}
 \newcommand{\Rrelbar}{\mathrel{\raise.75ex\hbox{\oalign{%
   $\scriptstyle\relbar$\cr
  \vrule width0pt height.5ex$\scriptstyle\relbar$}}}}

\newcommand{\field}[1]{\mathbb{#1}}

\newcommand{\F}{\field{F}}

\renewcommand{\L}{\field{L}}
\newcommand{\PP}{\field{P}}
\newcommand{\X}{\field{X}}
\newcommand{\ideal}[1]{\mathfrak{#1}}
\newcommand{\m}{\ideal{m}}

\newcommand{\ffunc}[1]{\mathrm{#1}}
\newcommand{\func}[1]{\mathrm{#1} \,}
\newcommand{\fm}{\mathfrak{m}}

\DeclareMathOperator{\op}{op}
\newcommand{\lra}{\longrightarrow}
\newcommand{\xra}{\xrightarrow}
\newcommand{\dperp}{!}
\newcommand{\rank}{\func{rank}}

\newcommand{\Tor}{\ffunc{Tor}}

\newcommand{\im}{\func{im}}

\newcommand{\ra}{\rightarrow}

\DeclareMathOperator{\Hom}{Hom}

\newcommand{\be}{\begin{enumerate}}
\newcommand{\ee}{\end{enumerate}}

\newcommand{\li}
 {\leftfootline}

\renewcommand{\phi}{\varphi}

\DeclareMathOperator{\HH}{H}


\let\int\relax
\DeclareMathOperator{\int}{i}

\DeclareMathOperator{\id}{id}
\DeclareMathOperator{\Span}{Span}

\author[Faber]{Eleonore Faber}
\address{School of Mathematics \\ University of Leeds \\ Leeds, LS2 9JT, UK }
\email{e.m.faber@leeds.ac.uk}

\author[Juhnke-Kubitzke]{Martina Juhnke-Kubitzke}
\address{Department of Mathematics \\University of Osnabr\"uck  \\ Germany }
\email{juhnke-kubitzke@uni-osnabrueck.de}

\author[Lindo]{Haydee Lindo}
\address{Department of Mathematics \\ Harvey Mudd College \\ Claremont, CA 91711 }
\email{hlindo@hmc.edu}

\author[Miller]{Claudia Miller}
\address{Mathematics Department \\ Syracuse University \\ Syracuse, NY 13244}
\email{clamille@syr.edu}

\author[R.G.]{Rebecca R.G.}
\address{Department of Mathematical Sciences \\ George Mason University \\ Fairfax, VA  22030}
\email{rrebhuhn@gmu.edu}

\author[Seceleanu]{Alexandra Seceleanu}
\address{Department of Mathematics \\ University of Nebraska--Lincoln \\ Lincoln, NE  68588}
\email{aseceleanu@unl.edu}

\title{Canonical Resolutions over Koszul Algebras}
\subjclass[2020]{Primary: 16E05, Secondary: 13D02.}
\keywords{Koszul algebra, minimal free resolution, Betti numbers}
\date{}

\begin{document}
\begin{abstract}
We generalize Buchsbaum and Eisenbud's resolutions for the powers of the maximal ideal of a polynomial ring to resolve powers of the homogeneous maximal ideal over graded Koszul algebras. Our approach has the advantage of producing resolutions that are both more explicit and minimal compared to those previously discovered  by Green and Mart\'{\i}nez-Villa \cite{GreenMartinezVilla} or Mart\'{\i}nez-Villa and Zacharia \cite{MartinezVillaZacharia}.
\end{abstract}

\maketitle
\setcounter{tocdepth}{1} 
\tableofcontents

\section{Introduction}
\label{sec-intro}

Koszul algebras show up naturally and abundantly in algebra and topology. They were first introduced by Priddy in 1970  as algebras for which the bar resolution, which is normally far from minimal, admits a reduction to a comparatively small subcomplex; see \cite{Priddy}. Priddy's work explained contemporaneous ideas on restricted Lie algebras in the work of May and, separately,  that of Bousfield, Curtis, Kan, Quillen, Rector, and Schlesinger; see \cite{May, BCKQRS}. Priddy was an algebraic topologist, but Koszul algebras have since  been linked to several fundamental concepts across mathematics where they appear naturally and are studied extensively in fields as diverse as topology \cite{GKM}, representation theory \cite{BGS}, commutative algebra \cite{Froberg}, algebraic geometry  \cite{BGG}, noncommutative geometry \cite{Manin}, and number theory \cite{Positselski14}. 
For a general overview, see the monograph by Polishchuk and Positselski \cite{Polishchuk}.

An interesting feature of Koszul algebras is that they appear in pairs: 
every Koszul algebra $A$ has a dual algebra $A^!$ which is also a Koszul algebra (see Section \ref{sec-background}). 
 The prototypical example of such a Koszul pair is a polynomial algebra $S$ over a field, together with the corresponding exterior algebra $\Lambda$. The associated theory of Koszul duality is a generalization of the duality underlying the Bernstein--Gelfand--Gelfand correspondence \cite{BGG} describing coherent sheaves on projective space in terms of modules over the exterior algebra. This exemplifies the philosophy that facts relating the symmetric and exterior algebras often have Koszul duality counterparts.

In this paper we extend Priddy's methods of constructing free resolutions over standard graded Koszul algebras and generalize Buchsbaum and Eisenbud's resolutions in \cite{BE} to resolve powers of the  homogeneous maximal ideal over Koszul algebras. Resolutions over Koszul algebras have previously appeared in works of Green and Mart\'{\i}nez-Villa \cite[Theorem 5.6]{GreenMartinezVilla}, Mart\'{\i}nez-Villa and Zacharia \cite[Proposition 3.2]{MartinezVillaZacharia} and in other sources referenced below. Our approach has the advantage of producing explicit minimal resolutions.

In particular, in  \cite{Priddy} Priddy exploits a natural differential on $A\otimes_k A^!$ to give an explicit construction for the linear minimal graded free resolution of the residue field of a graded Koszul algebra; see Definition \ref{def:Priddycx}. In this paper, we extend this construction to a family of acyclic complexes that yields highly structured resolutions of the powers of the homogeneous maximal ideal over  standard graded  Koszul algebras; see Definition \ref{def:Fcx}. Since these complexes are typically not minimal, we also seek to determine their minimal counterparts. To achieve this, we take inspiration from results that describe structured resolutions over a polynomial ring $S$ constructed starting from the Koszul complex (exterior algebra) $\Lambda$ and its generalizations; see \cite{BE}. We provide analogs of these results using any pair of Koszul dual algebras, $A$ and $A^!$, instead of $S$ and $\Lambda$.

Our main result generalizes the canonical resolutions for the powers of the homogeneous maximal ideal constructed over a polynomial ring $S$ by Buchsbaum and Eisenbud in \cite{BE} to obtain minimal free resolutions for powers of the homogeneous maximal ideal of a graded  Koszul algebra. In contrast to the situation over $S$, these are in general infinite resolutions. This allows us to obtain an explicit formula for the graded Betti numbers defined by  $\beta_{i,j}(\m^a)=\dim_k\Tor_i(\m^a,k)_j$ and the graded  Poincar\'e series $P^A_{\m^a}(y,z)=\sum_{i,j\geq 0}\beta_{i,j}(\m^a) y^jz^i$.
The following is a combination of Theorem \ref{thm:minimalres} and Corollary~\ref{cor:betti}. 

\vspace{2mm}

\noindent 
{\bf Theorem.}
{\it If $A$ is a graded Koszul algebra with homogeneous maximal ideal $\fm$, the complexes  
\begin{equation*}
\mathbb L^A_a:\ \ 
\cdots \to L^A_{n,a}
\xra{\partial_{n}'}
L^A_{n-1,a}
\xra{\partial_{n-1}'} 
\dots 
\xra{\partial_{1}'}
L^A_{0,a} \xra{\varepsilon_a}\m^a\to 0 , 
\end{equation*}
defined in equation \eqref{Lcomplex} with the augmentation map $\varepsilon_a$ defined in equation \eqref{epsilon} are minimal free resolutions of the powers $\m^a$ with $a\geq 1$.

The nonzero graded Betti numbers of the powers of $\m$ are given by
\[
\beta_{n,n+a}^A(\m^a)=\sum_{i=1}^a (-1)^{i+1} \dim_k( (A^!)^*_{n+i})\dim_k(A_{a-i}).
\]
and the graded Poincar\'e series is
\[
P^A_{\m^a}(y,z)=-(-z)^{-a}H_{{A^!}^*}(yz) H_{A/\m^a}(-yz).
\]
}
 In particular, the minimal graded resolution of $\m^a$ is $a$-linear. 

\vspace{2mm}

The Betti numbers presented in the theorem are recovered in a more restricted setting in the recent paper \cite{vandebogert} investigating resolutions of monomial ideals over strongly Koszul algebras via different techniques. 

The paper is structured as follows. In Section~\ref{sec-background}, we provide background on Koszul algebras and the Priddy complex. In Section~\ref{sec-enveloping}, we explain how to obtain a free solution of a module $M$ from a resolution of the ring over its enveloping algebra. In Section \ref{sec-koszul}, we rewrite this resolution as the totalization of a double complex, in the case that the module is a power of the homogeneous maximal ideal and the ring is a Koszul algebra. In Section \ref{sec-minimal}, we give the minimal resolution and Betti numbers for the powers of the homogeneous maximal ideal over a Koszul algebra.
In Section~\ref{sec-examples} we apply our construction to several specific Koszul algebras $A$ to obtain explicit formulas for the Betti numbers of $\m^a$.

\section{Koszul algebras and the Priddy resolution}
\label{sec-background}

Throughout $k$ is a field and $A$ is a graded $k$-algebra having finite-dimensional graded components with $A_i=0$ for $i<0$ and $A_0=k$. 
We further assume that $A$ is standard graded, that is, $A$ is generated by $A_1$ as an algebra over $A_0=k$.

\begin{defn}[{\cite[Chapter 2]{Priddy}}]
We say that $A$ is \textit{Koszul} if $k=A/A_{>0}$ admits a linear graded free resolution over $A$, i.e., a graded free resolution $P_\bullet$ in which $P_i$ is generated in degree $i$.
\end{defn}

Classes of graded Koszul algebras arise from: quadratic complete intersections \cite{Tate}, quotients of a polynomial ring by quadratic monomial ideals \cite{Froberg}, quotients of a polynomial ring by homogeneous ideals which have a  quadratic Gr\"obner basis, and from Koszul filtrations \cite{Koszulfiltration}; 
see, for example, the survey paper by Conca \cite{ConcaSurvey}.

We now focus on quadratic algebras in order to define Koszul duality.

\begin{defn}[{\cite[Chapter 1, Section 2]{Polishchuk}}] 
Let $A$ be a standard graded $k$-algebra. We say that $A$ is quadratic if $A=T(V)/Q$, where $V$ is a $k$-vector space, $T(V)$ is the tensor algebra of $V$, and $Q$ is a quadratic ideal of $T(V)$.



If $A$ is a quadratic algebra, its \textit{quadratic dual algebra} is defined by
\[
A^!=\frac{T(V^*)}{Q^\perp}
\] 
where $V^*=\Hom_k(V,k)$ and $Q^\perp$ is the quadratic ideal generated by the orthogonal complement to $Q_2$ in $T(V^*)_2 = V^* \otimes_k V^*$ with respect to the natural pairing between $V \otimes V$ and $V^* \otimes V^*$ given by
\[\langle v_1 \otimes v_2,v_1^* \otimes v_2^* \rangle=\langle v_1,v_1^*\rangle \langle v_2,v_2^* \rangle.
\]

Choosing dual bases $x_1,\ldots, x_d$ and $x_1^*,\ldots, x_d^*$ for $V$ and $V^*$ respectively yields that $T(V)=k\langle x_1,\ldots, x_d \rangle$ and $T(V^*)=k\langle x_1^*,\ldots, x_d^*\rangle $ are polynomial rings in noncommuting variables of  degrees $|x_i|=1$ and $|x_i^*|=-1$. This allows one to compute $Q^\perp$ given a quadratic ideal $Q\subseteq T(V)$ using linear algebra, as described for example in \cite[Section 8]{MP}.
\end{defn}

Graded Koszul algebras are quadratic (see, for example, \cite[Chapter 2, Definition 1]{Polishchuk}) and the duality of quadratic algebras restricts well to the class of Koszul algebras since $A$ and $A^!$ are Koszul simultaneously \cite[Chapter 2, Corollary 3.2 ]{Polishchuk}. Moreover,  $(A^!)^!=A$.

\begin{example}
The main example of Koszul dual algebras is given by the symmetric algebra on a vector space $V$
\[
S=k[x_1,\ldots, x_d]=\frac{k\langle x_1,\ldots, x_d \rangle}{( x_ix_j-x_jx_i, 1\leq i<j\leq d)}
\]
and the exterior algebra on $V^*$
\[
S^!=\Lambda=\frac{k\langle x_1^*,\ldots, x_d^* \rangle}{( (x_i^*)^2, x_i^*x_j^*+x_j^*x_i^*, 1\leq i\leq j\leq d)}.
\]
\end{example}

\begin{example}
\label{ex:0}
For the following commutative Koszul algebra  
\[A =\frac{ k[x,y,z]}{(x^2,xy,y^2)}=\frac{k\langle x,y,z \rangle}{(x^2,xy,y^2, xz-zx, xy-yx, yz-zy)},\]
the dual algebra is given by 
\[A^!=\frac{k\langle x^*, y^*, z^*\rangle}{((z^*)^2, x^*z^*+z^*x^*, y^*z^*+z^*y^*)}.\]
This pair of algebras are further discussed in Example \ref{ex:1}.
\end{example}

\begin{defn}
\label{def:Priddycx}
The {\em Priddy complex} \cite{Priddy} of a quadratic algebra $A$ is the complex $P^A_\bullet$ whose $i$-th term is given by
\[
P^A_i= A \otimes_k  {(A^!)}_i^{*} ,
\]
and the differential is defined by right multiplication by the trace element $\sum_{i=0}^d x_i\otimes x_i^*$, 
where multiplication by $x_i^*\in A^!$ on $(A^!)^*$ is defined as the dual of multiplication by $x_i^*$ on $A^!$.
\end{defn}

\begin{ipg}
\label{Priddyres}
The importance of the Priddy complex lies in the fact that $P^A_\bullet$ is acyclic if and only if $A$ is Koszul; see \cite[Chapter 2, Corollary 3.2]{Polishchuk}. Moreover, when $A$ is Koszul the Priddy complex, also called the generalized Koszul resolution, is a minimal free resolution of the residue field $k$ of $A$. This will be the base case in the proof that our construction in Section~\ref{sec-koszul} is a resolution.
\end{ipg}

\begin{ipg}
\label{Koszulduality}
Duality of Koszul algebras extends to an equivalence of derived categories that goes back to \cite{BGSch} and was developed further in \cite{BGS}. Let $T = A \otimes_k A^!$ which is an $A$-$A^!$-bimodule. For complexes $N_\bullet$ of $A^!$-modules and $M_\bullet$ of $A$-modules  define functors $L(N_\bullet)=T\otimes_{A^!} N_\bullet\cong A\otimes_k N_\bullet$ and $R(M_\bullet)=\Hom_A(T,M_\bullet)\cong \Hom_k(A^!,M_\bullet)\cong (A^!)^*\otimes_k M_\bullet$. It is shown in \cite[Theorem 2.12.1]{BGS} that these functors induce an equivalence of categories
\[
L : D^{\uparrow}(A^!) \rightleftarrows D^{\downarrow}(A) : R
\]
where $D^\uparrow(A^!)$ stands for the derived category of complexes $N_\bullet$ of graded $A^!$-modules  with $N_{i,j}=0$ for $i\gg 0$ or $i+j\ll0$  and $D^\downarrow(A)$  is the derived category of complexes $M_\bullet$ of graded $A$-modules with $M_{i,j}=0$ for $i\ll0$ or $i+j\gg 0$.

\end{ipg}


\section{Resolutions via the enveloping algebra}
\label{sec-enveloping}

Let $A$ be a (not necessarily commutative) $k$-algebra where $k$ is a field. 
In this section, we review how one obtains a free resolution of any $A$-module $M$ from a resolution of $A$ over its enveloping algebra. 
In general, one obtains a resolution that is far from minimal. We remedy this in Sections~\ref{sec-koszul} and ~\ref{sec-minimal} over Koszul algebras $A$ for the modules $\fm^a$ (and hence $A/\fm^a$). 

\begin{ipg}
Given a $k$-algebra $A$, its enveloping algebra is given by $A^e=A \otimes_k A^{\op}$. A left $A^e$-module structure is equivalent to an $A$-$A$-bimodule structure via
\begin{equation}
\label{eq:Aeaction}
(a \otimes b)\cdot m = a \cdot m \cdot b
\end{equation}
We consider $A$ as an $A^e$-module via the action in \eqref{eq:Aeaction}, where $a,b, m\in A$ and $ a \cdot m \cdot b$ represents internal multiplication in $A$.

Consider a graded free resolution\footnote{One can always take the bar resolution of $A$ over its enveloping algebra, but that is usually far from minimal.}
of $A$ over $A^e$ and note that  
any free left $A^e$-module $F$ can be rewritten as 
\[
F = A^e \otimes_k V 
=
A \otimes_k A^{\op} \otimes_k V
\cong
A \otimes_k V \otimes_k A
\] 
for some vector space $V$, where the rightmost expression is thought of as an $A$-$A$-bimodule via the outside two factors. 
Thus the resolution will be of the form 
\begin{equation}
\label{eq:resAoverAe}
    \cdots \to A \otimes_k V_2 \otimes_k A\to A \otimes_k V_1 \otimes_k A \to A \otimes_k A \xra{\varepsilon} A \to 0,
\end{equation}
where the augmentation $\varepsilon$ from $A \otimes_k A$ to $A$ is given by multiplication across the tensor.

We observe that $A \otimes_k k \otimes_k A \cong A \otimes_k A$. Thus setting $V_0=k$, we may write the resolution as a quasi-isomorphism of $A^e$-modules
\[
A \otimes_k V_\bullet \otimes_k A \xra{\simeq} A
\]
\end{ipg}

Next we show how to construct an $A$-free resolution for arbitrary $A$-modules $M$ using \eqref{eq:resAoverAe}.
This is well known; we include it because the construction is the basis of our next step in Section~\ref{sec-koszul}. 
In the case of Koszul algebras, it can also be seen using Koszul duality, and, more generally, it follows when the resolution of $A$ comes from an acyclic twisting cochain; see the remarks following the proof. 

\begin{prop}
\label{prop:resMoverA}
If $M$ is a graded $A$-module and $A \otimes_k V_\bullet \otimes_k A \xra{\simeq} A$ is a graded $A^e$-free resolution of $A$, then the induced map $A \otimes_k V_\bullet \otimes_k M \xra{\simeq} M$ is a graded $A$-free resolution of $M$ where the $A$-module structure on the latter tensor product is via the first factor.
\end{prop}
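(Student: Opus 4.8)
The plan is to produce the resolution of $M$ by applying $-\otimes_A M$ to the given $A^e$-free resolution of $A$ along its \emph{right} $A$-module structure, and to observe that this loses no information because $A$ is flat over itself on that side.

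First I would record the one-sided freeness of the input resolution. Each term $A\otimes_k V_i\otimes_k A$ of $A\otimes_k V_\bullet\otimes_k A$ carries an $A$-bimodule structure (i.e.\ a left $A^e$-module structure), and the differentials together with $\varepsilon\colon A\otimes_k A\to A$ are $A^e$-linear, hence in particular right $A$-linear. Forgetting the left action, $A\otimes_k V_i\otimes_k A\cong (A\otimes_k V_i)\otimes_k A$ is graded free as a right $A$-module, so
\[
\cdots \to A\otimes_k V_1\otimes_k A \to A\otimes_k A \xra{\varepsilon} A \to 0
\]
is a graded free resolution of $A$ \emph{as a right $A$-module}.

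The key point is that $A$ is free of rank one, hence projective and flat, as a right $A$-module. Therefore $\Tor^A_n(A,M)=0$ for $n>0$ and $\Tor^A_0(A,M)\cong M$, and since $\Tor$ may be computed from the above (flat, indeed free) right-$A$-resolution of $A$, the complex $(A\otimes_k V_\bullet\otimes_k A)\otimes_A M\to A\otimes_A M$ is acyclic. (Alternatively, $\id_A$ lifts to a homotopy equivalence $A\otimes_k V_\bullet\otimes_k A\xra{\simeq}A$ of complexes of right $A$-modules, as any two projective resolutions of a given module are homotopy equivalent; applying the additive functor $-\otimes_A M$ preserves this.)

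It remains to identify the resulting complex with the one in the statement. Using $A\otimes_A M\cong M$ on the rightmost factor gives $(A\otimes_k V_i\otimes_k A)\otimes_A M\cong A\otimes_k V_i\otimes_k M$, which is graded free as a left $A$-module via its first factor; under this isomorphism $\varepsilon\otimes_A M$ becomes multiplication $A\otimes_k M\to M$, and the right-$A$-linear differentials of the input resolution descend to exactly the induced left-$A$-linear maps $A\otimes_k V_i\otimes_k M\to A\otimes_k V_{i-1}\otimes_k M$. Hence $A\otimes_k V_\bullet\otimes_k M\to M$ is a graded $A$-free resolution of $M$. There is no genuinely hard step; the only thing requiring care is the bookkeeping of left versus right $A$-actions and of internal degrees — in particular verifying that the $A^e$-linear differentials are right $A$-linear so the forgetful step is legitimate, and that tensoring on the correct side reproduces precisely the maps named in the proposition. (As the excerpt notes, for Koszul $A$ this is also visible through Koszul duality, and more generally it follows from the formalism of acyclic twisting cochains.)
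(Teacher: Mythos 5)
Your proposal is correct and follows essentially the same route as the paper: both arguments rest on viewing $A\otimes_k V_\bullet\otimes_k A$ as a complex of free right $A$-modules resolving the (free, hence flat) right $A$-module $A$, so that applying $-\otimes_A M$ preserves acyclicity --- the paper phrases this via contractibility of the augmented complex viewed as a mapping cone, which is exactly your parenthetical homotopy-equivalence alternative, while your main $\Tor$-vanishing formulation is an equivalent repackaging. The final identification of the tensored complex with $A\otimes_k V_\bullet\otimes_k M$ and its left $A$-linearity is handled the same way in both.
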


\begin{proof}
First note that both  $A \otimes_k V_\bullet \otimes_k A$ and $A$ are $A$-$A$-bimodules in the obvious ways. 
Furthermore, the complex $A \otimes_k V_\bullet \otimes_k A$ (considered as an $A$-module via its righthand factor) and the trivial complex $A$ both consist of free $A$-modules (although the latter is not a free $A^e$-module). Therefore the quasi-isomorphism $A \otimes_k V_\bullet \otimes_k A \xra{\simeq} A$ is actually a homotopy equivalence of $A$-modules. Hence it remains a quasi-isomorphism after tensoring over $A$ on the right with arbitrary $A$-modules. 
To see this, note that the augmented complex of free $A$-modules \eqref{eq:resAoverAe} is contractible, that is, homotopy equivalent to 0 (equivalently, it is split exact over $A$). But this complex is the mapping cone of the chain map $A \otimes_k V_\bullet \otimes_k A \ra A$. 

Therefore, upon tensoring \eqref{eq:resAoverAe} on the right over $A$ with a left $A$-module $M$, one obtains a quasi-isomorphism of left $A$-modules 
\begin{align*}
(A \otimes_k V_\bullet \otimes_k A) \otimes_A M &\xra{\simeq} A \otimes_A M 
\\
A \otimes_k V_\bullet \otimes_k M 
&\xra{\simeq} M 
\end{align*}
As the original resolution of $A$ was a map of $A$-$A$-bimodules, this one is still a map of $A$-modules (via the lefthand factor of the tensor product). 
Viewing $V_\bullet \otimes_k M$ as a (rather large) $k$-vector space, one sees that the complex on the left consists of free $A$-modules, giving a free $A$-resolution of $M$.
\end{proof}

\begin{rem}
\label{viakoszulduality}
Under the assumption that $A$ is a Koszul algebra, we include here an alternate proof of Proposition \ref {prop:resMoverA} using Koszul duality.
Using the functors from the equivalence described in \ref{Koszulduality}, for a graded $A$-module $M$,
one gets that $L(R(M)) \simeq M$. On the other hand, one computes that $R(M)$ is the complex

\[
0 \to (A^!)^* \otimes_k M_0 
\to (A^!)^* \otimes_k M_1 
\to \cdots 
\to (A^!)^* \otimes_k M_{i} 
\to \cdots
\]
Furthermore, it is clear from the definition that $L((A^!)^*)$ is simply the Priddy complex $P^A_\bullet$. Applying this to the complex above termwise and totalizing gives that  $L(R(M))$ equals the totalization of 
\[
0 \to P^A_\bullet \otimes_k M_0 
\to P^A_\bullet \otimes_k M_1 
\to \cdots 
\to P^A_\bullet \otimes_k M_{i} 
\to \cdots
\]
which is exactly the complex described in Proposition~\ref{prop:resMoverA}. 
In the case that $M=A/\fm^a$, one gets the double complex $\mathbb X^A_a$ described in Corollary~\ref{truncateddiagramhere}.  
\end{rem}

\begin{rem}
\label{viatwisting}
More generally, we now briefly describe this from the perspective of acyclic twisting cochains. Although these go far back, for recent quite general versions of the duality they afford, modeled on that of Dwyer, Greenlees, and Iyengar in \cite{DGI-06} and generalizing Koszul duality, and for descriptions of how it specializes to the situation of Koszul algebras, as well as the terms used below, see Avramov's paper \cite{Avramov}, especially Theorem 4.7. 
 
Let $A$ be an augmented dg (differential graded) algebra. When there is an augmented dg coalgebra $C$ with a map $\tau\colon C \to A$ of degree $-1$ that is a {\it twisting cochain}, that is, a Maurer-Cartan equation 
\[
\partial_A \tau + \tau \partial_C + \mu (\tau \otimes \tau) \Delta =0,
\]
holds, where $\Delta\colon C \to C\otimes C$ is the diagonal map and $\mu$ is the multiplication map, then one can form tensor products whose differential is ``twisted'' by $\tau$ yielding a natural map $A \otimes {}_\tau C_\tau \otimes A \to A$. If this is a quasi-isomorphism, then $\tau$ is called {\it acyclic}, in which case the induced map $A \otimes {}_\tau C_\tau \otimes M \to M$ is a quasi-isomorphism for all dg $A$-modules $M$ and a duality generalizing Koszul duality holds. 
An example is given by the bar construction $C=BA$ with the canonical map $\tau\colon BA \to A$, but in the case of Koszul algebras one can get by with a much smaller complex using Priddy's construction. 
\end{rem} 

\begin{rem}
Suppose $A$ is local (or standard graded) $k$-algebra with (homogeneous) maximal ideal $\fm$. The resolutions obtained in Proposition~\ref{prop:resMoverA} are in general not minimal (respectively, minimal graded) resolutions even when one starts with a minimal (respectively, minimal graded) resolution of $A$ over $A^e$. 
For example, for the explicit resolution $\mathbb X^A_a$ given in Corollary~\ref{truncateddiagramhere}, although $\partial'$ is minimal, $\partial''$ is clearly not. 
\end{rem}

\section{The case of Koszul algebras}
\label{sec-koszul}

In this section, under the further assumption that $A$ is a Koszul $k$-algebra, we write the $A$-free resolution of $A/\fm^a$ obtained in Remark \ref{viakoszulduality} as the  totalization of a certain double complex. 

First we recall the minimal graded resolution of $A$ over $A^e$ following the presentation in \cite[Section 3]{vandenBergh}. 
It is a symmetrization of the resolution   of $k$ over $A$ found by Priddy in \cite{Priddy}, which is presented in Definition \ref{def:Priddycx}. 

\begin{defn}
\label{def:Fcx}
Let $A$ be a Koszul $k$-algebra with dual Koszul algebra $A^!$. 
Let $(A^!)^*=\Hom_k(A^!,k)$; thus $(A^!)^*$ is an $A^!$-module where the action of $A^!$ on $(A^!)^*$ is the dual of the action of $A^!$ on itself. 
Define free $A$-modules
\[
F_n 
= A \otimes_k (A^!)^*_n \otimes_k A 
\]
and differentials
\[
\partial_n = (\partial')_n + (-1)^n (\partial'')_n 
\] 
where  
\[
\partial' 
={\textrm{ right multiplication by }}
\sum_{i=0}^d x_i \otimes x_i^* \otimes 1
\]
(which will form our vertical maps)
and 
\[
\partial''
={\textrm{ left multiplication by }}
\sum_{i=0}^d 1 \otimes x_i^* \otimes x_i
\]
(which will form our horizontal maps)\footnote{There is a misprint in \cite{vandenBergh} with regards to this map.}.
Then the complex 
\begin{equation}
\label{eq:Fcomplex}
    \F^A: \cdots \to F_n\xra{\partial_n} F_{n-1}\to \cdots F_0 \xra{\varepsilon} A \to 0
\end{equation} 
  augmented by the multiplication map $\varepsilon$ from $F_0=A \otimes_k k \otimes_k A \cong A \otimes_k A$ to $A$ is the minimal graded free resolution of $A$ over $A^e$ \cite[Proposition 3.1]{vandenBergh}.  The complex is $\F^A$ is the totalization of the double complex with differentials $\partial'$ and $\partial''$ depicted in Figure \ref{fulldiagram}. In particular, $F_n$ is the sum of the modules on the $n$-th antidiagonal of this double complex.
  \end{defn}
  \vspace{-2em}
  \begin{figure}[h!]
\begin{equation*}
\begin{aligned} 
    \xymatrixrowsep{1pc} \xymatrixcolsep{2pc}
    \xymatrix{    
\vdots\ar[d]_{\partial'} 
      & \vdots\ar[d]_{\partial'} 
      &\vdots\ar[d]_{\partial'} 
      &  
\\
A \otimes_k (A^!)^*_a \otimes_k A_{0} \ar[d]_{\partial'} \ar[r]^-{(-1)^a\partial''}
      & A \otimes_k (A^!)^*_{a-1} \otimes_k A_1 \ar[d]_{\partial'} \ar[r]^-{(-1)^{a-1}\partial''}
      & A \otimes_k (A^!)^*_{a-2} \otimes_k A_{2} \ar[d]_{\partial'} \ar[r]^{\ \ \ \ \ (-1)^{a-2}\partial''}
      &  \cdots
\\
      A \otimes_k (A^!)^*_{a-1} \otimes_k A_0 \ar[d] \ar[r]^-{(-1)^{a-1}\partial''} 
      & A \otimes_k (A^!)^*_{a-2} \otimes_k A_{1} \ar[d] \ar[r]^-{(-1)^{a-2}\partial''} 
      & A \otimes_k (A^!)^*_{a-3} \otimes_k A_{2} \ar[d] \ar[r]^{\ \ \ \ \ (-1)^{a-3}\partial''} 
      & \cdots 
\\
      \vdots \ar[d]_{\partial'} 
      & \vdots\ar[d]_{\partial'}
      & \vdots\ar[d]_{\partial'}
      & 
\\
        A \otimes_k (A^!)^*_2 \otimes_k A_0 \ar[r]^{\partial''}\ar[d]_{\partial'}
      & A \otimes_k (A^!)^*_1 \otimes_k A_1 \ar[r]^{-\partial''}\ar[d]_{\partial'}
      & A \otimes_k (A^!)^*_0 \otimes_k A_2
      & 
      & 
\\
        A \otimes_k (A^!)^*_1 \otimes_k A_0 \ar[r]^{-\partial''}\ar[d]_{\partial'}
      & A \otimes_k (A^!)^*_0 \otimes_k A_1 
      &  
      & 
      & 
\\
        A \otimes_k (A^!)^*_0 \otimes_k A_0
      & 
      &   
      & 
     & 
\\
}
\end{aligned}  
  \end{equation*}
 \caption{The minimal resolution of a Koszul algebra $A$ over $A^e$.}
 \label{fulldiagram}
 \end{figure}
\begin{rem}
\label{rem:Priddy}
Here is the explicit connection with Priddy's resolution: tensoring \eqref{eq:Fcomplex} on the right over $A$ with $k$ gives Priddy's minimal resolution of $k$ as a left $A$-module in Definition~\ref{def:Priddycx}, also called the generalized Koszul resolution. Tensoring \eqref{eq:Fcomplex} on the left gives the minimal resolution of $k$ as a right $A$-module. 
\end{rem}

\begin{ipg}
Considering the graded strands of \eqref{eq:Fcomplex}, one can write this complex as a totalization of an anticommutative double complex, which we also call $\mathbb F^A$, of free $A$-modules given by the free $A$-modules
\begin{equation}
    \label{eq:bicomplex}
F_{ij}=A \otimes_k (A^!)^*_i \otimes_k A_j
\end{equation}
where we are using the first tensor factor as ``coefficients'' and the maps $\partial'$ and $\pm\partial''$ of Definition \ref{def:Fcx} become the vertical and horizontal maps, respectively in the diagram (\ref{fulldiagram}). 
Note that $i$ is the homological degree in the complex $\mathbb F^A$. 
\end{ipg}

\begin{ipg}
\label{}
One can interpret the complex $\mathbb F^A$ in the language of the functors introduced in \ref{Koszulduality} as $\mathbb F^A=L(R(A))$, where $A$ is viewed as a complex concentrated in homological degree 0. The discussion in \ref{def:Fcx} shows there is a quasi-isomorphism $L(R(A))\simeq A$. This was previously shown in \cite[Thm 2.12.1]{BGS} and is a particular case of Remark \ref{viakoszulduality}. 
\end{ipg}

Next we apply the discussion from Section~\ref{sec-enveloping} to the $A$-module $A/\fm^a$ and arrange its resolution into a double complex similarly to the one shown in Figure \ref{fulldiagram} above. 

\begin{cor}
\label{truncateddiagramhere}
Totalization of the truncation of the double complex \eqref{eq:bicomplex} obtained by removing the columns with index $j\geq a\geq 1$ gives a graded $A$-free resolution 
\begin{equation}
    \label{eq:Xcx}
\X^A_a
= A \otimes_k (A^!)^* \otimes_k A_{\leq a-1}
\xra{\simeq} A/\fm^a.
\end{equation}
\end{cor}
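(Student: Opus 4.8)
The plan is to combine Proposition~\ref{prop:resMoverA} with the explicit double-complex description of $\F^A$ from Definition~\ref{def:Fcx}. First I would observe that applying Proposition~\ref{prop:resMoverA} to the module $M = A/\fm^a$ and the resolution $\F^A \colon A \otimes_k (A^!)^* \otimes_k A \xra{\simeq} A$ of Definition~\ref{def:Fcx} immediately yields a graded $A$-free resolution
\[
A \otimes_k (A^!)^* \otimes_k (A/\fm^a) \xra{\simeq} A/\fm^a,
\]
with $A$-module structure via the leftmost factor and differential induced from $\partial = \partial' + (-1)^{(-)}\partial''$. The only substantive content remaining is to identify this with the totalization of the truncated double complex in \eqref{eq:bicomplex}.

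The key step is the identification $A/\fm^a \cong A_{\leq a-1}$ as a graded $k$-vector space, since $A$ is standard graded and $\fm^a = A_{\geq a}$. Under this identification the term $F_{ij} = A \otimes_k (A^!)^*_i \otimes_k A_j$ of the double complex \eqref{eq:bicomplex} survives precisely when $j \leq a-1$, i.e.\ exactly the columns that remain after removing the columns with index $j \geq a$. I would then check that the differentials behave correctly under the quotient: the vertical map $\partial'$ (right multiplication by $\sum_i x_i \otimes x_i^* \otimes 1$) acts only on the left two tensor factors, so it passes unchanged to the truncation; the horizontal map $\partial''$ (left multiplication by $\sum_i 1 \otimes x_i^* \otimes x_i$) raises the $A_j$-degree by one, and on the quotient $A/\fm^a$ any component landing in degree $\geq a$ becomes zero — which is exactly the statement that the truncated double complex has the horizontal maps out of column $a-1$ set to zero. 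Hence the totalization of the truncation of \eqref{eq:bicomplex} along columns $j \leq a-1$ is literally the complex $A \otimes_k (A^!)^* \otimes_k (A/\fm^a)$ produced by Proposition~\ref{prop:resMoverA}.

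Finally I would assemble these observations: since Proposition~\ref{prop:resMoverA} guarantees the latter complex is a graded $A$-free resolution of $A/\fm^a$ (freeness is clear since each $(A^!)^*_i \otimes_k A_j$ is a finite-dimensional $k$-vector space, making each $F_{ij}$ a finite free $A$-module, and the resolution is graded because every map in sight is homogeneous), the totalization $\X^A_a = A \otimes_k (A^!)^* \otimes_k A_{\leq a-1}$ of the truncated double complex is a graded $A$-free resolution of $A/\fm^a$, as claimed. Alternatively, one may invoke Remark~\ref{viakoszulduality}: the complex in question is exactly $L(R(A/\fm^a))$ after totalizing, and $L(R(A/\fm^a)) \simeq A/\fm^a$ by the Koszul duality equivalence of \ref{Koszulduality}, giving a second, cleaner route to acyclicity once the term-by-term identification above is in place.

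The main obstacle I anticipate is purely bookkeeping rather than conceptual: one must be careful that the truncation of a double complex obtained by \emph{deleting} columns (as opposed to taking a brutal truncation that could leave a non-exact tail) does produce an honest complex whose totalization computes the right thing — this works here precisely because deleting the columns $j \geq a$ corresponds on the module side to the exact operation of tensoring with the quotient $A/\fm^a = A/A_{\geq a}$, so no spurious homology is introduced. Verifying the compatibility of signs in $\partial = \partial' + (-1)^i \partial''$ with the quotient, and confirming that the augmentation $\varepsilon_a \colon F_{00} = A \otimes_k k \otimes_k A_0 \cong A \to A/\fm^a$ is the natural surjection, are the remaining routine checks.
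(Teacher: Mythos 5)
Your proposal is correct and follows essentially the same route as the paper: apply Proposition~\ref{prop:resMoverA} to $M=A/\fm^a$, identify $A/\fm^a$ with $A_{\leq a-1}$ as a graded $k$-vector space, and observe that the graded strands of the resulting complex are exactly the truncated double complex, with $\partial'$ unchanged and $\partial''$ killed out of column $a-1$. Your extra remarks on the sign bookkeeping and the Koszul-duality alternative match the paper's surrounding discussion.
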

\begin{proof}
Applying Proposition \ref{prop:resMoverA} by
tensoring the resolution of $A$ over $A^e$ on the right over $A$ with $A/\fm^a$ gives a graded $A$-free resolution
\[
\X^A_a=A \otimes_k (A^!)^* \otimes_k A/\fm^a
\xra{\simeq} A/\fm^a
\]
By means of the $k$-vector space identification
\[
A/\fm^a = A_{\leq a-1}
\]
the resolution becomes
\[
\X^A_a=A \otimes_k (A^!)^*\otimes_k A_{\leq a-1}
\xra{\simeq} A/\fm^a
\]
Viewing graded strands, one can write this as a totalization of an anticommutative double complex of free $A$-modules given by the terms
$F_{ij=}A \otimes_k (A^!)^*_i \otimes_k A_j$
with $i \geq 0$, $0 \leq j \leq a-1$ of \eqref{eq:bicomplex} with the differentials inherited from those described in Definition \ref{def:Fcx}. 
\end{proof}

 We display the diagram for the double complex that yields the $A$-free graded resolution $\X^A_a$ of $A/\fm^a$  in Corollary \ref{truncateddiagramhere}  in Figure \ref{fig:2}. In the language of \ref{Koszulduality} this resolution can be described as $\X^A_a=L(R(A/\m^a))$.

  \begin{figure}[h!]
  \begin{equation*}
\label{truncateddiagram}
\begin{aligned} 
    \xymatrixrowsep{1pc} \xymatrixcolsep{2pc}
    \xymatrix{    
\vdots\ar[d]_{\partial'} 
      & \vdots\ar[d]_{\partial'} 
      &  
      & \vdots\ar[d]_{\partial'} 
      & 
\\
A \otimes_k (A^!)^*_a \otimes_k A_{0} \ar[d]_{\partial'} \ar[r]^-{(-1)^a\partial''}
      & A \otimes_k (A^!)^*_{a-1} \otimes_k A_1 \ar[d]_{\partial'} \ar[r]^-{(-1)^{a-1}\partial''}
      &  \cdots
 \ar[r]^-{\partial''}
      & A \otimes_k (A^!)^*_1 \otimes_k A_{a-1} \ar[d]_{\partial'}
\\
      A \otimes_k (A^!)^*_{a-1} \otimes_k A_0 \ar[d] \ar[r]^-{(-1)^{a-1}\partial''} 
      & A \otimes_k (A^!)^*_{a-2} \otimes_k A_{1} \ar[d] \ar[r]^-{(-1)^{a-2}\partial''} 
      & \cdots \ar[r]^-{-\partial''}
      & A \otimes_k (A^!)^*_0 \otimes_k A_{a-1}
\\
      \vdots \ar[d]_{\partial'} 
      & \vdots\ar[d]_{\partial'}
      & 
      &
\\
        A \otimes_k (A^!)^*_1 \otimes_k A_0 \ar[r]^{-\partial''}\ar[d]_{\partial'}
      & A \otimes_k (A^!)^*_0 \otimes_k A_1 
      &  
      & 
      & 
\\
        A \otimes_k (A^!)^*_0 \otimes_k A_0
      & 
      &   
      & 
      & 
\\
}
\end{aligned}  
  \end{equation*}
  \caption{The the $A$-free graded resolution $\X^A_a$ of $A/\fm^a$.}
  \label{fig:2}
 \end{figure}


\begin{rem}
The resolution $\X^A_a$ is minimal for $a=1$  in which case $\X^A_a$ recovers the Priddy complex without its first term.  However for $a\geq 2$ this resolution is typically non minimal as the rows are split acyclic; see \ref{chunk-L}. The goal of Section~\ref{sec-minimal} is to produce a minimal free resolution for $A/\m^a$ using $\X^A_a$.
\end{rem}

\section{Minimal resolutions for powers of the maximal ideal}
\label{sec-minimal}

We introduce  complexes $\mathbb L^A_a$ inspired by work of  Buchsbaum and Eisenbud \cite{BE}. These will turn out to be the minimal resolutions for the powers of the homogeneous maximal ideal of a graded Koszul algebra.

\begin{ipg} 
\label{chunk-L}
We define free $A$-modules analogous to the Schur modules used by Buchsbaum and Eisenbud in their resolutions (the case where $A$ is a polynomial ring) in \cite{BE}. 
First note that the rows of the double complex \eqref{eq:bicomplex} except the bottom one are exact; in fact, they can be viewed as the result of applying the exact base change $A\otimes_k -$ to the strands of the dual Priddy complex, all of which are exact except the one whose homology is $k$ (in the case where $A$ is a polynomial ring, it is applied to the strands of the tautological Koszul complex; see \cite[Section 1.4]{MiRa-18}). These complexes are contractible, as they consist of free $A$-modules, and so all kernels, images, and cokernels of the differentials are free as well. 

Define for $a>0$ the following free $A$-modules
\begin{align}
\label{Ldef}
L^A_{n, a}
&=\im\left(A\otimes_k {A^{!}}^*_{n+1}\otimes_k A_{a-1} 
\xrightarrow{(-1)^{n+1}\partial''}
A\otimes_k {A^{!}}^*_n\otimes_kA_{a} \right)
\\
&= \ker \left(A\otimes_k {A^{!}}^*_{n}\otimes_k A_{a} \xrightarrow{(-1)^{n}\partial''}
A\otimes_k {A^{!}}^*_{n-1}\otimes_kA_{a+1} \right) \ .
\end{align}

The vertical differentials $\partial'$ in the diagram \ref{truncateddiagram} induce maps on these modules, which we again denote by $\partial'$, to yield a complex 
\begin{equation}
\label{Lcomplex}
    \mathbb L^A_a \colon \  
    \cdots \to L^A_{n,a}
    \xra{\partial_{n}'} L^A_{n-1,a}
    \xra{\partial_{n-1}'} \dots \xra{\partial_{1}'}
L^A_{0,a}.
\end{equation}
This complex is minimal in the sense that $\partial'(L^A_{n,a})\subseteq \m L^A_{n-1,a}$ for all $n$ since the same property holds for the columns of the complex in Figure \ref{fulldiagram}
viewed as complexes with differential $\partial'$.  The construction of the complex $\mathbb L^A_a$ is depicted in Figure \ref{diagramwithLs}.
\end{ipg} 

\begin{figure}[h!]
 \begin{equation*}
\begin{aligned} 
    \xymatrixrowsep{1pc} \xymatrixcolsep{0.5pc}
    \xymatrix{   
 \vdots\ar[d]_{\partial'} 
  & \vdots\ar[d]_{\partial'} 
 &
  &\vdots\ar[d]_{\partial'} 
  & \vdots\ar[d]_{\partial'_{n+1}} 
  & 
  &
 \\
A \otimes_k (A^!)^*_{n+a} \otimes_k A_{0} \ar[d]_{\partial'} \ar[r]
      & A \otimes_k (A^!)^*_{n+a-1} \otimes_k A_1 \ar[d]_{\partial'} \ar[r]
      &  \cdots
 \ar[r]
      & A \otimes_k (A^!)^*_{n+1} \otimes_k A_{a-1} \ar[d]_{\partial'}
      \ar[r]
      &L_{n,a}^A \ar[d]_{\partial'_n}
\\
 \vdots\ar[d]_{\partial'} 
       & \vdots\ar[d]_{\partial'} 
       &
       &\vdots\ar[d]_{\partial'} 
       & \vdots\ar[d]_{\partial'_1} 
       & 
       &
 \\
A \otimes_k (A^!)^*_a \otimes_k A_{0} \ar[d]_{\partial'} \ar[r]
      & A \otimes_k (A^!)^*_{a-1} \otimes_k A_1 \ar[d]_{\partial'} \ar[r]
      &  \cdots
 \ar[r]
      & A \otimes_k (A^!)^*_1 \otimes_k A_{a-1} \ar[d]_{\partial'}
      \ar[r]
      &L_{0,a}^A 
\\
      A \otimes_k (A^!)^*_{a-1} \otimes_k A_0 \ar[d] \ar[r] 
      & A \otimes_k (A^!)^*_{a-2} \otimes_k A_{1} \ar[d] \ar[r] 
      & \cdots \ar[r]
      & A \otimes_k (A^!)^*_0 \otimes_k A_{a-1} 
      &
\\
      \vdots \ar[d]_{\partial'} 
      & \vdots\ar[d]_{\partial'}
      & 
      &
      &
\\
        A \otimes_k (A^!)^*_1 \otimes_k A_0 \ar[r]\ar[d]_{\partial'}
      & A \otimes_k (A^!)^*_0 \otimes_k A_1 
      &  
      & 
      & 
      & 
\\
        A \otimes_k (A^!)^*_0 \otimes_k A_0
      & 
      &   
      & 
      & 
      & 
\\
}
\end{aligned}  
  \end{equation*}
  \caption{The construction of the complex $\mathbb L^A_a$.}
  \label{diagramwithLs}
 \end{figure}

\begin{lemma}
\label{lem:augmentation}
The complex $\L^A_a$ can be augmented by the evaluation map 
\begin{equation}
\label{epsilon}
\varepsilon_a\colon L^A_{0,a}=A\otimes_k {A^{!}}^*_{0} \otimes_k A_a\to \m^a
\end{equation}
which is the restriction of the multiplication map 
\[ 
\varepsilon:A\otimes_k {A^{!}}^*_{0} \otimes_k A\to A \text{ sending  } r\otimes v\otimes s\mapsto rvs.
\]
\end{lemma}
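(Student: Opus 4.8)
The plan is to verify that the evaluation map $\varepsilon_a$ is well-defined — i.e. that the image of $L^A_{0,a}$ under the multiplication map $\varepsilon$ lands inside $\fm^a$ — and that the composite $\varepsilon_a \circ \partial_1'$ vanishes, so that $\mathbb L^A_a$ augmented by $\varepsilon_a$ is a complex. The well-definedness is essentially a degree count: $L^A_{0,a}$ is a subquotient built from $A\otimes_k (A^!)^*_0 \otimes_k A_a$, and since $(A^!)^*_0 = k$ sits in internal degree $0$ while $A_a$ sits in internal degree $a$, the multiplication map $r\otimes v\otimes s\mapsto rvs$ sends a pure tensor with $s\in A_a$ into $A_{\geq a} = \fm^a$ (using that $A$ is standard graded, so $\fm^a = A_{\geq a}$). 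Thus $\im(\varepsilon|_{L^A_{0,a}})\subseteq \fm^a$, and one should also note that $\varepsilon$ restricted to $A\otimes_k k\otimes_k A_a\cong A\otimes_k A_a \to A$ surjects onto $A\cdot A_a = \fm^a$, which will be needed later for exactness (though only well-definedness and the complex property are claimed here).

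The second point, $\varepsilon_a\circ\partial_1' = 0$, I would deduce from the corresponding fact one level down in the original double complex $\mathbb X^A_a$ of Corollary~\ref{truncateddiagramhere}. Recall from \ref{chunk-L} that $L^A_{0,a} = \ker\big(A\otimes_k (A^!)^*_0\otimes_k A_a \xrightarrow{\partial''} A\otimes_k (A^!)^*_{-1}\otimes_k A_{a+1}\big)$, which is everything since $(A^!)^*_{-1}=0$; so $L^A_{0,a}$ is literally the whole module $A\otimes_k (A^!)^*_0\otimes_k A_a$, and the subtlety is only in $L^A_{n,a}$ for $n\geq 1$. The induced differential $\partial_1'$ on $L$-modules is, by construction, the restriction of the vertical differential $\partial'$ of Figure~\ref{fulldiagram}. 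Since $\varepsilon\circ\partial' = 0$ already in the augmented complex $\mathbb F^A$ (equivalently in $\mathbb X^A_a$, which is a genuine complex resolving $A/\fm^a$ by Corollary~\ref{truncateddiagramhere}), restricting along $L^A_{1,a}\hookrightarrow A\otimes_k(A^!)^*_1\otimes_k A_{a-1}$ and composing with $\partial'$ into $L^A_{0,a}$ and then with $\varepsilon_a$ gives zero. One caveat: $\varepsilon_a$ is the restriction of $\varepsilon$ to the column $j=a$, whereas in $\mathbb X^A_a$ the augmentation lives on the column $j=0$; so I would instead argue directly that $\varepsilon_a\circ\partial_1'=0$ by checking it on pure tensors, using that $\partial'$ is right multiplication by $\sum_i x_i\otimes x_i^*\otimes 1$ and that $\varepsilon_a$ multiplies across — the relevant identity is that the map $(A^!)^*_1\to (A^!)^*_0\otimes A_1$ dual to multiplication, paired against the trace element, telescopes to zero, exactly as in the standard Koszul/Priddy computation that $\varepsilon\circ\partial_1 = 0$ for the Priddy resolution of $k$.

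I expect the only real content to be the bookkeeping of which internal degrees and which $(A^!)^*$-components appear, and confirming that the "restriction of $\varepsilon$" is compatible with the identifications $F_0 = A\otimes_k k\otimes_k A\cong A\otimes_k A$ used throughout; the main obstacle is thus notational rather than mathematical — making sure that the induced map $\partial_1'$ on the $L$-modules composed with $\varepsilon_a$ is genuinely computed inside the column $j=a$ of the untruncated diagram of Figure~\ref{diagramwithLs}, not the truncated $\mathbb X^A_a$, and that no sign from the $(-1)^n$ twist in $\partial_n = \partial' + (-1)^n\partial''$ interferes (it does not, since for the augmentation only $\partial'$ is involved). Everything else is a direct unwinding of definitions.
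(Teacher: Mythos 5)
The well-definedness half of your argument is fine: $L^A_{0,a}$ is indeed all of $A\otimes_k (A^!)^*_0\otimes_k A_a$, and the degree count $\varepsilon(r\otimes v\otimes s)=rvs\in A_{\geq a}=\fm^a$ is exactly what is needed. The gap is in the complex property $\varepsilon_a\circ\partial_1'=0$. First, your claim that ``$\varepsilon\circ\partial'=0$ already in the augmented complex $\mathbb F^A$'' is false: in $\mathbb F^A$ the augmentation only annihilates the \emph{total} differential, i.e.\ $\varepsilon\circ(\partial'-\partial'')=0$, while $\varepsilon\circ\partial'$ and $\varepsilon\circ\partial''$ are each nonzero. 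Second, the fallback you propose --- a termwise ``telescoping'' computation on pure tensors, modelled on the Priddy computation that $\varepsilon\circ\partial_1=0$ for the resolution of $k$ --- does not go through. In the Priddy case the vanishing happens because the target is $k=A/\fm$ and the image of $\partial_1$ lies in $\fm$; here the target is $\fm^a\subseteq A$ and $\varepsilon_a$ is honest multiplication, so nothing is killed for degree reasons. Concretely, for $v\in (A^!)^*_1$ dual to $x_1^*$ one gets $\varepsilon_a\bigl(\partial'(1\otimes v\otimes s)\bigr)=x_1s$, which is nonzero in general, so $\varepsilon_a\circ\partial'$ does \emph{not} vanish on the ambient module $A\otimes_k (A^!)^*_1\otimes_k A_a$.

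What your argument is missing is precisely the use of the defining property of $L^A_{1,a}$: elements of $L^A_{1,a}$ are (typically non-pure) tensors lying in $\ker\bigl(\partial''\colon A\otimes_k (A^!)^*_1\otimes_k A_a\to A\otimes_k (A^!)^*_0\otimes_k A_{a+1}\bigr)$, and the vanishing of $\varepsilon_a\circ\partial_1'$ holds only on that kernel. The paper's proof makes this one-line: it checks on pure tensors of the ambient free module that $\varepsilon\circ(\partial'-\partial'')(r\otimes v\otimes s)=rvs-rvs=0$, hence $\varepsilon\circ\partial'=\varepsilon\circ\partial''$ everywhere, and then uses $\partial''|_{L^A_{1,a}}=0$ to conclude $\varepsilon\circ\partial'(L^A_{1,a})=\varepsilon\circ\partial''(L^A_{1,a})=0$. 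Your write-up never invokes membership in $L^A_{1,a}$ when composing with $\varepsilon_a$, so as it stands the key step is not established; repairing it essentially forces you into the paper's argument (or an equivalent use of $L^A_{1,a}=\im\partial''$ from the previous column together with $\varepsilon\circ\partial''\circ\partial''=0$... which again needs the identity $\varepsilon\circ\partial'=\varepsilon\circ\partial''$).
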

\begin{proof}
As stated in Definition \ref{def:Fcx}, $\varepsilon$ is an augmentation map $\F^A_\bullet\to A$. We verify explicitly that $\varepsilon$ satisfies the required property $\varepsilon\circ(\partial'-\partial'')=0$ below:
\begin{eqnarray*}
\varepsilon\circ(\partial'-\partial'')(r\otimes v\otimes s)=\varepsilon(rv\otimes 1\otimes s-r\otimes 1\otimes vs)=rvs-rvs=0.
\end{eqnarray*}
Since $\partial''|_{L^A_{1,a}}=0$ it follows from the computation above that $\varepsilon\circ \partial' (L^A_{1,a})=0$, hence the complex $  \mathbb L^A_a$ can be augmented to 
\begin{equation*}
    \cdots \to L^A_{n,a}\xra{\partial_{n}'}
L^A_{n-1,a}\xra{\partial_{n-1}'} \dots \xra{\partial_{1}'}
L^A_{0,a} \xra{\varepsilon_a}\m^a\to 0.
\end{equation*}
\end{proof}

The following is the main result of our paper. The case when $A$ is an exterior algebra has appeared previously in \cite[Corollary 5.3]{EFS}. The proof therein uses the self-injectivity of the exterior algebra in a crucial manner, and therefore does not seem to extend to all Koszul algebras. 

\begin{thm}
\label{thm:minimalres}
If $A$ is a Koszul algebra, the complexes $\mathbb L^A_a$ defined in equation \eqref{Lcomplex} with the augmentation map $\varepsilon_a$ defined in \eqref{epsilon} are minimal free resolutions for the powers $\m^a$ of the maximal ideal with $a\geq 1$.
\end{thm}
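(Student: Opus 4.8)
The plan is to prove that $\mathbb L^A_a$ is a resolution of $\m^a$ by using the double complex $\X^A_a$ of Corollary~\ref{truncateddiagramhere}, together with the fact (from \ref{chunk-L}) that all the rows above the bottom one are split exact complexes of free $A$-modules. The key observation is that $\mathbb L^A_a$ arises as the ``vertical'' complex one obtains after first taking homology of $\X^A_a$ in the horizontal direction: the module $L^A_{n,a}$ is simultaneously the kernel and image at the relevant spot of the horizontal differential $\partial''$ (equation \eqref{Ldef}), and the vertical maps $\partial'$ descend to it. So I would organize the proof as a spectral sequence / iterated mapping cone argument on the (truncated) double complex.

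First I would set up the bicomplex $\X^A_a$ with its $\partial'$ (vertical) and $\pm\partial''$ (horizontal) maps, noting that its totalization resolves $A/\m^a$ by Corollary~\ref{truncateddiagramhere}, hence the truncated totalization resolves $\m^a$ after removing the corner term $A\otimes_k(A^!)^*_0\otimes_k A_0 = F_{0,0}$ which maps onto $A/\m^a$ with image $k$; equivalently, $\m^a$ is resolved by $\X^A_a$ with $F_{0,0}$ deleted and the augmentation replaced by $\varepsilon_a$. Next, run the spectral sequence of the double complex that first takes homology with respect to $\partial''$. Because every row except the bottom one is split exact (they are $A\otimes_k-$ applied to exact strands of the dual Priddy complex, as explained in \ref{chunk-L}), the only surviving columns after this first page are: (i) the entries $L^A_{n,a}$ sitting at the right-hand end of each truncated row $n\ge 1$ — here I must be careful: in the \emph{truncated} complex the rows are no longer exact at their right end, and the homology there is exactly $L^A_{n,a} = \operatorname{coker}$ of the last truncated horizontal map $=$ image of $\partial''$ into the next column over, which matches \eqref{Ldef} — and (ii) the bottom row, whose $\partial''$-homology is concentrated in the corner and equals $k$ (this is the strand of the dual Priddy complex computing $k$). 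After the corner term is removed (to pass from $A/\m^a$ to $\m^a$), only the column of the $L^A_{n,a}$ survives, with induced differential $\partial'$; since this column already has $E_1=E_\infty$ for degree reasons (it is a single column), the totalization's homology equals the homology of $(\mathbb L^A_a,\partial')$. Comparing with the fact that the totalization (minus corner) resolves $\m^a$ gives acyclicity of $\mathbb L^A_a$ in positive degrees and $H_0=\m^a$, with the augmentation being exactly $\varepsilon_a$ of Lemma~\ref{lem:augmentation}. Minimality is immediate from the last sentence of \ref{chunk-L}: $\partial'(L^A_{n,a})\subseteq \m L^A_{n-1,a}$ because the columns of Figure~\ref{fulldiagram} already have this property and $L^A_{n,a}$ is a submodule of a column entry.

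The main obstacle I anticipate is bookkeeping at the \emph{truncated} boundary: one must verify that truncating the double complex by deleting columns $j\ge a$ interacts correctly with the horizontal homology, i.e.\ that the ``new'' cokernel created at column $j=a-1$ of row $n$ is precisely $L^A_{n,a}$ as defined by the image description in \eqref{Ldef}, and that the surviving differential is genuinely the claimed $\partial'_n$. Equivalently — and this is probably the cleanest route, avoiding spectral sequences entirely — one can realize $\mathbb L^A_a$ as the totalization of the double complex obtained from the truncated $\X^A_a$ by replacing each truncated row (for $n\ge 1$) with its ``stupid completion'': splice the deleted tail $\cdots\to A\otimes_k(A^!)^*_{n+a}\otimes_k A_0\to\cdots\to A\otimes_k(A^!)^*_{n+1}\otimes_k A_{a-1}$ back on, as in Figure~\ref{diagramwithLs}, so that each row becomes a genuine (split exact) free resolution of $L^A_{n,a}$; then $\mathbb L^A_a$ is obtained from $\X^A_a$ (minus corner) by an explicit chain homotopy equivalence collapsing each such split-exact augmented row onto its augmentation $L^A_{n,a}$, and a homotopy equivalence of double complexes totalizes to one of complexes. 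This reduces everything to: (a) each row is split exact over $A$ — given in \ref{chunk-L}; (b) the collapsing maps are compatible with $\partial'$ — which follows because $\partial'$ was a bicomplex differential; (c) the corner term accounts exactly for the difference between resolving $A/\m^a$ and resolving $\m^a$. I expect (b), tracking signs $(-1)^n$ through the collapse, to be the only genuinely fiddly point.
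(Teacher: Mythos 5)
Your route is genuinely different from the paper's main proof. The paper argues by induction on $a$: the base case is the Priddy resolution, and the inductive step uses the short exact sequence of complexes $0\to \L^A_{a-1}\to P^A_\bullet\otimes_k A_{a-1}\to \L^A_a[-1]\to 0$ coming from \eqref{Ldef}, whose long exact homology sequence reduces everything to identifying $\ker\bigl(\HH_0(\L^A_{a-1})\to \HH_0(P^A_\bullet\otimes_k A_{a-1})\bigr)$ with $\m^a$ via an explicit commutative square. What you propose --- collapsing the split-exact rows of the truncated bicomplex onto the column of $L^A_{n,a}$'s --- is essentially the paper's own Remark~\ref{rem:alternate} (the acyclic assembly lemma), which the authors record as an alternate proof. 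What the inductive route buys, and what the paper explicitly says the row-collapsing route does \emph{not} deliver, is the identification of the augmentation: Remark~\ref{rem:alternate} ``only determines the augmentation map up to an isomorphism on its target.'' Your minimality argument is correct and is the same as the paper's.

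The genuine gap is exactly at that point. After taking $\partial''$-homology of the rows, the surviving terms are \emph{not} a single column: besides the $L^A_{n,a}$ (which sit in filtration degree $n+a$), the corner term $F_{0,0}\cong A\otimes_k k\otimes_k k\cong A$ survives in filtration degree $0$ --- its row homology is the free module $A$, not $k$ as you wrote --- and it lies in the same total homological degree as $\coker(L^A_{1,a}\to L^A_{0,a})$. So the spectral sequence does not degenerate after $E_1$; there is an essential higher differential (equivalently, a connecting map) $\HH_0(\L^A_a)\to A$, and it is precisely the injectivity of this map together with $\HH_0(\X^A_a)=A/\m^a$ that identifies $\HH_0(\L^A_a)$ with the submodule $\m^a\subseteq A$. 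Your assertion that ``only the column of the $L^A_{n,a}$ survives \dots\ so the totalization's homology equals the homology of $(\L^A_a,\partial')$'' skips this, and the subsequent claim that the resulting augmentation ``is exactly $\varepsilon_a$'' is unproved: to get it you must trace the connecting map through the contracting homotopies of the rows in heights $1,\dots,a-1$, which is the fiddly computation you flag at the end but do not carry out. Relatedly, deleting only the corner $F_{0,0}$ does not turn a resolution of $A/\m^a$ into one of $\m^a$; the homological degree $0$ part of $\X^A_a$ is all of $\bigoplus_{j=0}^{a-1}A\otimes_k(A^!)^*_0\otimes_k A_j$, so the passage from $A/\m^a$ to $\m^a$ must go through the connecting map above rather than through a naive truncation. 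These points are repairable, but they constitute the actual content of the acyclicity-plus-augmentation statement, and the paper's inductive proof is designed precisely to avoid them.
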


\begin{proof}
The complexes $\L^A_a$ are minimal by the discussion in \ref{chunk-L}. The proof of the remaining claims is by induction on $a\geq 1$. 

The definition of $L_{n,1}$ in \eqref{Ldef} shows that there are isomorphisms 
\[
L^A_{n,1}\cong A\otimes_k {A^!}^*_{n+1} \otimes_k k \cong  A\otimes_k {A^!}^*_{n+1},
\]
since the map $\partial ''$ is injective on $\mathbb F^A_{n+1,0}$ for $n\geq 0$, the leftmost column of the double complex in Figure \ref{fulldiagram}
(this column considered by itself is in fact $\X^A_1$). Therefore there is an isomorphism of complexes $\mathbb L^A_1\cong (\X^A_1)_{\geq 1}[-1]$, where $(\X^A_1)_{\geq 1}$ denotes the truncation of the complex $\X^A_1$ by removing the homological degree 0 component. Since $\X^A_1=P^A_\bullet$ is just the Priddy complex (upon noting that $\partial' \colon (\X^A_1)_1 \to (\X^A_1)_0$ agrees with $\varepsilon$ under the identification $(\X^A_1)_0 = A\otimes_k {A^!}^*_0 \otimes_k A_0 \cong A$), we see that  
$(\X^A_1)_{\geq 1}\xra{\varepsilon} \m$ is a resolution of $\m$ by \ref{Priddyres} and the base case that $\mathbb L^A_1 \xra{\varepsilon} \m$ is a minimal resolution of $\m$ follows.

For arbitrary $a\geq 2$, \eqref{Ldef} gives a short exact sequence of complexes
\[
0\to \L^A_{a-1}\to P^A_\bullet \otimes_k A_{a-1} \to \L^A_a[-1] \to 0,
\]
where $P^A_n \otimes_k A_{a-1}=A\otimes_k {A^!}^*_n\otimes_k A_{a-1}$ is the $(a-1)$-st column of the double complex \eqref{truncateddiagram}. The notation signifies that this column can be viewed as the Priddy complex $P^A_\bullet$ tensored with $A_{a-1}$. From the long exact sequence in homology induced by the short exact sequence of complexes displayed above we deduce 
\[
\HH_i(\L^A_a)=\begin{cases}
0 & i\geq 1\\
\ker\left(\HH_0(\L^A_{a-1})\to \HH_0(P^A_\bullet\otimes_k A_{a-1}) \right) & i=0.
\end{cases}
\] 
It remains to show that $\HH_0(\L^A_a)=\m^a$. Indeed, the induced map in homology 
\[ 
\HH_0(\L^A_{a-1})\to \HH_0(P^A_\bullet\otimes_k A_{a-1})\cong \HH_0(P^A_\bullet)\otimes_k A_{a-1}
\]
can be recovered as the bottom map in the following commutative diagram
\[
 \xymatrixrowsep{1.5pc} \xymatrixcolsep{3.5pc}
 \xymatrix{
 L^A_{0,a-1} \ar@{=}[r]\ar[d]^{\varepsilon_{a-1}}
&     A\otimes_k(A^\dperp)_0^* \otimes_k  A_{a-1} \ar[d]^{\varepsilon_0\otimes \id_{A_{a-1}}}
\\
\m^{a-1} \ar[r]^{} & k\otimes_k A_{a-1}\cong \m^{a-1}/\m^{a}.
}
\]
Commutativity of the diagram  yields that for $r\otimes s\otimes v\in  \L^A_{0,a}$ the induced map in homology is given by  
\[\varepsilon_a(r\otimes s\otimes v)=rsv \mapsto \overline{r}sv,\]
where $\overline{r}$ is the coset of $r$ in $k=A/\m$.
Thus we obtain the desired identification 
\begin{eqnarray*}
&&\ker\left(\HH_0(\L^A_{a-1})\to \HH_0(\PP^A_\bullet\otimes_k A_{a-1}) \right)\\
&=&\varepsilon_a\left(\Span\{r\otimes s\otimes v \mid r\in \m, s\in k, v\in A_{a-1}\}\right)\\
 &\cong& \fm^{a}.
\end{eqnarray*}
\end{proof}

\begin{rem}
\label{rem:alternate}
Recall that rows of the double complex \eqref{eq:bicomplex} except the bottom one are exact; in fact, they can be viewed as the result of applying a base change to the strands of the dual Priddy complex; see \ref{chunk-L}. These complexes are contractible, as they consist of free $R$-modules. Hence for $n\geq 1$ the $(n+a)$-th row of \eqref{truncateddiagram}, counting from the bottom (as the 0th row), is quasi-isomorphic to $L_{n,a}^A$ and the lower rows (numbered 1 through $a$) are split exact. The acyclic assembly lemma \cite[Lemma 2.7.3]{Weibel} yields quasi-isomorphisms $(\X^A_a)_{\geq 1}[-1] \xra{\simeq} \L^A_a$ for $a\geq 1$.  As shown in Corollary \ref{truncateddiagramhere} there are quasi-isomorphisms $\X^A_a \xra{\simeq} A/\m^a$, hence also $(\X^A_a)_{\geq 1}\xra{\simeq} \m^a$. Transitivity yields a quasi-isomorphism $\L^A_a \xra{\simeq} \m^a$.

This approach gives an alternate proof for our main result, but it only determines the augmentation map up to an isomorphism on its target. We prefer the more explicit approach of Theorem \ref{thm:minimalres}, which specifies the augmentation map $\varepsilon_a$.
\end{rem}

The following corollary of Theorem \ref{thm:minimalres} gives an explicit formula for the Betti numbers of powers of the maximal ideal of a Koszul algebra. That $\m^a$ has an $a$-linear minimal free resolution also follows from \cite[Theorem 3.2]{Sega}. Once the linearity of this resolution has been established, \cite[Chapter 2, Corollary 3.2 (iiiM)]{Polishchuk} gives an alternate interpretation for the Betti numbers of $\m^a$ in terms of the graded components of a quadratic dual module for the $A$-module $\m^a$. However this description seems less amenable to explicit computations than our methods.

The next result utilizes the  Hilbert series 
\[
H_{(A^!)^*}(t)=\sum_{\ell\geq 0} \dim_k (A^{!})^*_\ell \cdot t^\ell \text{ and } H_{A/\m^a}(t)=\sum_{j= 0}^{a-1} \dim_k A_j \cdot t^j.
\] 

\begin{cor}
\label{cor:betti}
If $(A,\m)$ is a Koszul algebra, the nonzero graded Betti numbers of the powers of $\m$ are given by
\[
\beta_{n,n+a}^A(\m^a)=\sum_{i=1}^a (-1)^{i+1} \dim_k( (A^!)^*_{n+i})\dim_k(A_{a-i}).
\]
In particular, the minimal graded resolution of $\m^a$ is $a$-linear and its graded Poincar\'e series is
\[
P^A_{\m^a}(y,z)=-(-z)^{-a}H_{(A^!)^*}(yz) H_{A/\m^a}(-yz).
\]

Consequently, the nonzero Betti numbers of $A/\m^a$ are given by
\[
\beta_{n,j}^A(A/\fm^a)=\begin{cases}
\sum_{i=1}^{a} (-1)^{i+1} \dim_k( (A^!)^*_{n+i-1})\dim_k(A_{a-i}) & n > 0, \ j=n+a-1 \\
1 & n=j=0.
\end{cases}
\]
\end{cor}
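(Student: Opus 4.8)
\textbf{Proof proposal for Corollary~\ref{cor:betti}.}
The plan is to read the Betti numbers directly off the minimal resolution $\mathbb L^A_a$ provided by Theorem~\ref{thm:minimalres}, and then package the ranks of the free modules $L^A_{n,a}$ into Hilbert series. Since $\mathbb L^A_a$ is a minimal free resolution of $\m^a$, we have $\beta_{n,j}^A(\m^a)=\dim_k(L^A_{n,a})_j/\m(L^A_{n,a})_j$, i.e., the number of degree-$j$ generators of $L^A_{n,a}$. First I would compute $\dim_k$ of a minimal generating set of $L^A_{n,a}$, using the presentation of $L^A_{n,a}$ from \eqref{Ldef} as a kernel (equivalently image) inside the exact row of the double complex \eqref{eq:bicomplex}. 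Because that row is split exact and consists of free $A$-modules, the rank of each $L^A_{n,a}$ can be obtained by the alternating-sum (inclusion–exclusion) trick: the row
\[
\cdots \to A\otimes_k (A^!)^*_{n+2}\otimes_k A_{a-2}\to A\otimes_k (A^!)^*_{n+1}\otimes_k A_{a-1}\to A\otimes_k (A^!)^*_{n}\otimes_k A_{a}\to \cdots
\]
breaks into short exact sequences, so the minimal number of generators of $L^A_{n,a}$ (which sits in a single internal degree, namely $n+a$, by $a$-linearity) equals
\[
\sum_{i\geq 1}(-1)^{i+1}\dim_k (A^!)^*_{n+i}\cdot \dim_k A_{a-i},
\]
the sum being finite since $A_j=0$ for $j<0$, which recovers the stated formula and simultaneously shows the resolution is $a$-linear (all generators of $L^A_{n,a}$ in degree $n+a$).

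Next I would assemble the Poincaré series. Writing $P^A_{\m^a}(y,z)=\sum_{n,j}\beta_{n,j}^A(\m^a)y^jz^n=\sum_n \beta_{n,n+a}^A(\m^a)y^{n+a}z^n$, substitute the formula for $\beta_{n,n+a}$ and swap the order of summation over $n$ and over $i$. The inner sum over $n$ of $\dim_k(A^!)^*_{n+i}(yz)^{n+a}$ re-indexes, after setting $\ell=n+i$, to $(yz)^{a-i}\sum_{\ell\ge i}\dim_k(A^!)^*_\ell (yz)^\ell$; up to the missing low-degree terms this is $(yz)^{a-i}H_{(A^!)^*}(yz)$, and the correction terms combine with the factor $\dim_k A_{a-i}$ and the sign $(-1)^{i+1}$ to produce exactly the truncated Hilbert series $H_{A/\m^a}(-yz)$ evaluated against $H_{(A^!)^*}(yz)$. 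Collecting signs gives $P^A_{\m^a}(y,z)=-(-z)^{-a}H_{(A^!)^*}(yz)H_{A/\m^a}(-yz)$; it is cleanest to verify this identity by comparing coefficients of $y^{n+a}z^n$ on both sides rather than manipulating the series formally, since $H_{(A^!)^*}$ is generally not a polynomial. The bookkeeping with the $(-(-z)^{-a})$ prefactor and the sign $H_{A/\m^a}(-yz)$ is the one genuinely fiddly point, and is where I expect to spend the most care.

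Finally, the Betti numbers of $A/\m^a$ follow from the short exact sequence $0\to\m^a\to A\to A/\m^a\to 0$: minimally resolving $A/\m^a$ amounts to prepending $A\xra{\varepsilon_a}\m^a$ to $\mathbb L^A_a$ and shifting homological degree by one (the map $A\to A/\m^a$ contributes the single generator in degree $0$, accounting for $\beta_{0,0}^A(A/\m^a)=1$). Concretely, for $n>0$ one has $\beta_{n,j}^A(A/\m^a)=\beta_{n-1,j}^A(\m^a)$, which is nonzero exactly when $j=(n-1)+a=n+a-1$ and equals $\sum_{i=1}^a(-1)^{i+1}\dim_k(A^!)^*_{n+i-1}\dim_k A_{a-i}$, as claimed; one must check that no cancellation occurs between the new top term $A$ and $L^A_{0,a}=A\otimes_k A_a$, which is immediate from minimality of $\varepsilon_a$ (its image lies in $\m$, so $\varepsilon_a$ contributes no split summand). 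This degree-shift step is routine, so the only real obstacle is the sign/normalization bookkeeping in the Poincaré series identity.
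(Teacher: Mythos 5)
Your proposal is correct and follows essentially the same route as the paper: compute $\rank_A L^A_{n,a}$ by an alternating sum over the split-exact row of the double complex augmented to $L^A_{n,a}$, note that $L^A_{n,a}$ is generated in the single internal degree $n+a$ to get $a$-linearity, verify the Poincar\'e series identity by comparing coefficients of $y^{n+a}z^n$, and obtain the Betti numbers of $A/\fm^a$ by the degree shift coming from $0\to\m^a\to A\to A/\m^a\to 0$. The only difference is cosmetic: you make the short-exact-sequence step for $A/\fm^a$ explicit where the paper leaves it implicit.
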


\begin{proof}
The fact that minimal free resolution of $\m^a$ is $a$-linear follows from the Theorem \ref{thm:minimalres} and the description of the differential $\partial'$ of the complex \eqref{Lcomplex} in view of the fact that there is a splitting of the map $\partial''$ to each $L_{n,a}$ identifying a basis of it with part of a basis of the last column of $\mathbb X^A_a$. Consider the rows of the truncated complex $\X^A_a$ when augmented to the relevant $L_{n,a}$ as follows. 
\[
0
\lra
A \otimes_k (A^!)^*_{n+a} \otimes_k A_{0} \lra
\cdots
\lra
A \otimes_k (A^!)^*_{n+1} \otimes_k A_{a-1} 
\lra
L^A_{n,a}
\lra 0
\]
The exactness of this complex, as explained in Remark \ref{rem:alternate}, yields the identities
\begin{eqnarray*}
\beta_{n,n+a}^A(\m^a) 
&=& \rank_A( L^A_{n,a})=\sum_{i=1}^a (-1)^{i+1} \rank_A \left( {A\otimes_k (A^!)^*_{n+i} \otimes_k A_{a-i}} \right)
\\
&=&\sum_{i=1}^a (-1)^{i+1} \dim_k (A^!)^*_{n+i}\dim_k A_{a-i}
\end{eqnarray*}
and the vanishing of the remaining Betti numbers is due to the fact that the minimal resolution in Theorem \ref{thm:minimalres} is $a$-linear. 

 Lastly, the coefficients of the series
\[
\frac{-H_{(A^!)^*}(yz) H_{A/\m^a}(-yz)}{(-z)^a}=\sum_{n\geq 0} \left( \sum_{\substack{0\leq j\leq a-1 \\j+\ell=n+a}}(-1)^{j-a}\dim_k(A^{!})^*_\ell \dim_k A_j \right) z^ny^{n+a} ,
\]
agree with the preceding expression for $\beta^A_{n,n+a}(\m^a)$ by setting $j=a-i, \ell=n+i$.
\end{proof}



In contrast to Theorem \ref{thm:minimalres}, for non Koszul algebras the $A$-free resolution of $A/\fm^a$ afforded by Corollary \ref{truncateddiagramhere} cannot be minimized by the procedure presented in this section. We illustrate the obstructions by means of the following example.

\begin{example}
\label{ex-nonkoszul}
Let $A=k[x]/(x^3)$, which is a non Koszul (also non quadratic) algebra. The enveloping algebra is 
\[
A^e=A\otimes_kA =k[x]/(x^3)\otimes_k k[y]/(y^3)\cong k[x,y]/(x^3,y^3)
\]
and the $A^e$-module structure induced on $A$ by the (surjective) multiplication map $A^e=A\otimes_kA\xra{\varepsilon} A$ yields the isomorphism $A\cong A^e/(x-y)$. Therefore $A$ has the following two-periodic resolution over the complete intersection $A^e$
\[
\cdots \ra A^e \xra{x-y} A^e \xra{x^2+xy+y^2} A^e \xra{x-y} A^e \xra{\varepsilon} A\ra 0.
\]
Rewriting this complex in the form of Section~\ref{sec-enveloping} gives
\[
\cdots \ra A\otimes_k V_2 \otimes_k A \xra{\partial} A\otimes_k V_1 \otimes_k A \xra{\partial} A\otimes_k V_0 \otimes_k A  \xra{\varepsilon} A\ra 0,
\]
where each $V_i$ is a one dimensional vector space with basis $\{e_i\}$ and for $i>0$
\[
\partial(1\otimes e_i \otimes 1 )=
\begin{cases}
x\otimes e_{i-1}\otimes 1-1\otimes e_{i-1}\otimes y & \text{ for } i \text{ odd}\\
x^2\otimes e_{i-1}\otimes 1 + x\otimes e_{i-1}\otimes y +1\otimes e_{i-1}\otimes y^2 & \text{ for } i \text{ even}.
\end{cases}
\]

The conclusion of Corollary \ref{truncateddiagramhere} still holds and indicates that the truncated complexes $\X^A_a$ are (non minimal) free resolutions for $A/\fm^a$. But by contrast to the Koszul case, we see that arranging by grading as in \eqref{eq:bicomplex} yields a diagram that is not a bicomplex and whose rows are no longer exact (or even complexes!), and so in the truncated complex \eqref{eq:Xcx} the rows are no longer acyclic. Correspondingly, the modules $L_{n,a}$ one could define are no longer free. 
Thus there is no clear way to minimize the complex $\X^A_a$ in a similar manner to the technique used in this section, except for the case $a=1$ where $\X^A_a$ is already minimal.
\end{example}

\section{Examples}
\label{sec-examples}

In this section we provide examples which illustrate our constructions for certain Koszul algebras. For simplicity, all our examples are commutative algebras defined by quadratic monomial ideals, but of course there are plenty of noncommutative examples as well. This class  is known to yield Koszul algebras by \cite{Froberg}.

\begin{example}
\label{ex:1}
Consider the following pair of dual Koszul algebras from Example \ref{ex:0}
\[
A =\frac{ k[x,y,z]}{(x^2,xy,y^2)}
\qquad \text{and} \qquad
A^!=\frac{k\langle x^*, y^*, z^*\rangle}{((z^*)^2, x^*z^*+z^*x^*, y^*z^*+z^*y^*)}.
\]
The graded pieces $(A^!)_{-n}$ are spanned by the words of length $n$ on the alphabet $\{x^*,y^*,z^*\}$ where the first letter is $x^*, y^*$ or $z^*$ and the other $n-1$ are $x^*$ or $y^*$, whence $\dim_k (A^!)_{-n} = 3\cdot 2^{n-1}$ for $n\geq 1$. For $n\geq 1$, $A_n$ is spanned by monomials of the form $(z^*)^n$, $x^*(z^*)^{n-1}$, and $y^*(z^*)^{n-1}$ so that $\dim_k A_{-n}=3$. Thus in this case both $A$ and $A^!$ are infinite dimensional $k$-algebras.

The Priddy complex $P^A_\bullet$ \eqref{def:Priddycx} consists of terms of the form 
\begin{eqnarray*}
P_0 &=&A \otimes_k k \\ 
P_n &=& A \otimes_k k^{3\cdot 2^{n-1}}\text{ for } n\geq 1
\end{eqnarray*}
and the resolution of $A$ over $A^e$ viewed as a double complex \eqref{eq:bicomplex} has terms
\[
F_{i,j}=
A \otimes_k k^{3\cdot 2^{i-1}} \otimes_k A_j
=
\begin{cases}
A^{3\cdot 2^{i-1}}\otimes_A A& i\geq 1, j=0\\
A^{3\cdot 2^{i-1}}\otimes_A A^3& i\geq 1, j\geq 1.
\end{cases}
\]
Corollary \ref{cor:betti} reveals that the Betti numbers of $\fm^a$ are independent of $a$. Indeed for $a\geq 1$ and $n\geq 0$ we have
\begin{eqnarray*}
\beta_{n,n+a}(\fm^a)& =& \sum_{i=1}^{a-1} (-1)^{i+1}\cdot9 \cdot 2^{n+i-1}+(-1)^{a+1}\cdot3 \cdot 2^{n+a-1}\\
&=&  9\cdot 2^{n}\cdot  \frac{1-(-2)^{a-1}}{3} +(-1)^{a-1} \cdot3 \cdot 2^{a+n-1} \\
&=&3\cdot 2^n .
\end{eqnarray*}
\end{example}


\begin{example}
Consider the following commutative Koszul algebra  
\[A =\frac{ k[x,y,z]}{(xy,xz)}=\frac{k\langle x,y,z \rangle}{(xy,xz, xz-zx, xy-yx, yz-zy)}.\]
The Koszul dual algebra is given by 
\[A^!=\frac{k\langle x^*, y^*, z^*\rangle}{((x^*)^2, (y^*)^2, (z^*)^2, y^*z^*+z^*y^*)}\]
and its Hilbert function satisfies the Fibonacci recurrence
\[
\dim A^!_{-n-2}=\dim A^!_{-n}+\dim A^!_{-n-1}.
\]
Indeed, setting $u(n)$ to be the number of monomials in $A^!$ of degree $-n$ ending in $x$ and $v(n)$ to be the number of monomials in $A^!$ of degree $-n$ not ending in $x^*$, yields $u(n)=v(n-1)$ and $v(n)=2u(n-1)+u(n-2)$. The second expression follows because the number of  monomials ending in $y^*$ or $z^*$ where the previous letter is $x^*$ is $2u(n-1)$ and the number of monomials ending in $y^*z^*$ (or equivalently, $z^*y^*$) where the previous letter is $x$ is $u(n-2)$.  Thus this leads to
\begin{eqnarray*}
\dim A^!_{-n-2}&=&u(n+2)+v(n+2)=v(n+1)+2u(n+1)+u(n)\\
&=&v(n+1)+u(n+1)+v(n)+u(n)=\dim A^!_{-n-1} +\dim A^!_{-n}.
\end{eqnarray*}
This shows that the Betti numbers of $\fm$ are the Fibonnacci numbers starting with  $\beta^A_{0}(\fm)=3$ and $\beta^A_{1}(\fm)=5$.

The identity above in turn implies that the terms of the double complex as well as the free modules in the resolution of $\fm^a$ satisfy similar recurrences
\[
\rank F_{n+2,a}=\rank F_{n+1,a}+\rank F_{n,a}, \quad  \rank L_{n+2,a} =\rank L_{n+1,a}+\rank L_{n,a}.
\]
We conclude that the Fibonacci recurrence holds for Betti numbers 
\[
\beta^A_{n+2,n+2+a}(\fm^a) = \beta^A_{n+1,n+1+a}(\fm^a)+\beta^A_{n,n+a}(\fm^a) \text{ for } a\geq 1, n\geq 0
\]
subject,  if $a\geq 2$,  to the initial conditions $\beta^A_{0}(\fm^a)=a+4$ and $\beta^A_{1}(\fm^a)=2a+4$. Solving the above recurrence yields closed formulas for the Betti numbers of $\fm^a$ with  $a\geq 2$ as follows
\[
\beta^A_{n,n+a}(\fm^a) =
\left(\frac{a+4}{2}+\frac{3a+4}{2\sqrt{5}}\right)\left(\frac{1+\sqrt{5}}{2}\right)^n+\left(\frac{a+4}{2}-\frac{3a+4}{2\sqrt{5}}\right)\left(\frac{1-\sqrt{5}}{2}\right)^n.
\]
\end{example}

We now give an infinite resolution counterpart to a family of square-free monomial ideals that have appeared as ideals of the polynomial ring in work of Galetto \cite{Galetto}.

\begin{example}[See also {\cite[Example 3.18]{vandebogert}}]
Consider the dual pair of Koszul algebras 
\[
A=\frac{k[x_1,\ldots, x_d]}{(x_1^2,\ldots, x_d^2)}, \qquad \text{and} \qquad  A^!=\frac{k\langle x_1^*,\ldots, x_d^*\rangle}{(x_i^*x_j^*+x_j^*x_i^*, 1\leq i<j\leq d)},
\]
where $\dim_k(A_j)=\binom{d}{j}$ and $\dim_k(A^!_{-i})=\binom{i+d-1}{d-1}$. Thus the terms in the double complex \eqref{eq:bicomplex} are
\[
F_{i,j}=
A \otimes_k k^{\binom{i+d-1}{d-1}} \otimes_k k^{\binom{d}{j}}.
\]
Notice that for $a\leq d$ the ideal $\m^a$ of $A$ can be described as the ideal generated by all square-free monomials of degree $a$ in $A$, while for $a>d$ we have $\fm^a=0$. We compute the Betti numbers of this family of ideals using Corollary \ref{cor:betti} as follows
\begin{equation}
 \label{eq:fakeGalettobetti}
\beta_{n,n+a}(\fm^a)=\sum_{i=1}^a (-1)^{i+1}\binom{n+i+d-1}{d-1}\binom{d}{a-i}
\end{equation}
\end{example}
Note that $(-1)^{i+1}\dbinom{n+i+d-1}{d-1}$ is equal to $(-1)^{n-1}$ times the coefficient of $t^{n+i}$ in the Taylor expansion of the rational function $\frac{1}{(1+t)^d}$ around 0. 
Similarly, $\dbinom{d}{a-i}$ is the coefficient of $t^{a-i}$ in the binomial expansion of $(1+t)^d$. 
Since $\frac{1}{(1+t)^d} \cdot (1+t)^d=1$, for $n+a>0$, the coefficient of $t^{n+a}$ in their product is 0, i.e.
\[\sum_{i=-n}^{a} (-1)^{n+i}\dbinom{n+i+d-1}{d-1}\dbinom{d}{a-i}=0.\] However, when $i<a-d$, the second binomial coefficient is 0, so this can be restated as
\[\sum_{i=a-d}^a (-1)^{n+i}\dbinom{n+i+d-1}{d-1}\dbinom{d}{a-i}=0.\]
Combined with  \eqref{eq:fakeGalettobetti}, the identity above leads to the more compact formula
\[
\beta_{n,n+a}(\fm^a)=
\begin{cases}
\sum_{i=a-d}^{0} (-1)^{i}\binom{n+i+d-1}{d-1}\binom{d}{a-i} & 1\leq a\leq d\\
0 & a \geq d+1.
\end{cases}
\]
This is consistent with $\m^a=0$ for $a>d$ and can be easier to evaluate than \eqref{eq:fakeGalettobetti} for some values of $a$. For example, setting $a=d$ yields
\[
\beta_{n,n+d}(\fm^d)=
\binom{n+d-1}{d-1}.
\]

\noindent {\bf Acknowledgements.}
Our work started at the 2019 workshop  "Women in Commutative Algebra" hosted by Banff International Research Station. We thank the organizers of this workshop for bringing our team together. We acknowledge the excellent working conditions provided by BIRS and the support of the National Science Foundation for travel through grant DMS-1934391. We thank the Association for Women in Mathematics for funding from grant NSF-HRD 1500481. 

In addition, we have the following individual acknowledgements for support: 
Faber was supported by the European Union's Horizon 2020 research and innovation programme under the Marie Sk{\l}odowska-Curie grant agreement No 789580. 
Miller was partially supported by the NSF DMS-1003384.  
R.G.'s travel was partially supported by an AMS-Simons Travel Grant.
Seceleanu was partially supported by NSF DMS-1601024. 

We thank Liana \c{S}ega for helpful comments and for bringing  \cite{Sega} to our attention and Ben Briggs for answering a question and pointing us to \cite{vandenBergh}. 

Lastly, we thank the referee for a thorough reading and especially for guiding us to other points of view, cf.\ Remarks~\ref{viakoszulduality} and \ref{viatwisting}, as well as for pointing out the papers \cite{GreenMartinezVilla} and \cite{MartinezVillaZacharia}.


\newcommand{\etalchar}[1]{$^{#1}$}
\providecommand{\bysame}{\leavevmode\hbox to3em{\hrulefill}\thinspace}
\providecommand{\MR}{\relax\ifhmode\unskip\space\fi MR }
\providecommand{\MRhref}[2]{%
  \href{http://www.ams.org/mathscinet-getitem?mr=#1}{#2}
}
\providecommand{\href}[2]{#2}

\end{document}